\newcommand\Prob{{\mathbb P}}
\newcommand\E{{\mathbb E}}
\newcommand\V{{\rm Var}}
\newcommand\cov{{\rm Cov}}
\newcommand\given{\, \vert \, }
\newtheorem{theorem}{Theorem}
\newtheorem{lemma}{Lemma}
\newtheorem{proposition}{Proposition}
\newcommand\inprob{\buildrel {\rm P} \over  \longrightarrow}
\newcommand\convlaw{\buildrel {\rm D} \over \longrightarrow }
\title{Average Jaccard Index of Random Graphs}
\author{Qunqiang Feng, Shuai Guo and Zhishui Hu{\thanks{Email: huzs@ustc.edu.cn}}\\
{\small Department of Statistics and Finance, School of Management} \\
{\small University of Science and Technology of China}\\
{\small Hefei 230026, China}}
\date{}
\begin{document}
\maketitle

\begin{abstract}
  The asymptotic behavior of the Jaccard index in $G(n,p)$,
  the classical Erd\"{o}s-R\'{e}nyi random graphs model, 
  is studied in this paper, as $n$ goes to infinity.
  We first derive the asymptotic distribution of the Jaccard index 
  of any pair of distinct vertices, 
  as well as the first two moments of this index.  
  Then the average of the Jaccard indices over all vertex pairs
  in $G(n,p)$ is shown to be asymptotically normal under an additional mild condition
  that $np\to\infty$ and $n^2(1-p)\to\infty$. 
  
\bigskip
\noindent{\it Keywords:} Erd\"{o}s-R\'{e}nyi Random graph; Jaccard similarity;  
asymptotic distribution; inverse moment

\noindent{\it AMS 2020 Subject Classification}: Primary
   05C80,    % random graphs
   60C05     % Combinatorial probability
   secondary
   60F05   % Central limit and other weak theorems
\end{abstract}

\section{Introduction}

The {\em Jaccard index}, also known as the {\em Jaccard similarity coefficient},
was originally introduced by Paul Jaccard to measure the similarity between two sets~\cite{jaccard1901distribution}. 
For any two finite sets $\mathcal{A}$ and $\mathcal{B}$, 
the Jaccard index $J(\mathcal{A},\mathcal{B})$ is the ratio of the size of their intersection 
to the size of their union.
That is,
\begin{align*}
 J(\mathcal{A},\mathcal{B})=\frac{|\mathcal{A}\cap \mathcal{B}|}{|\mathcal{A}\cup \mathcal{B}|}=\frac{|\mathcal{A}\cap \mathcal{B}|}{|\mathcal{A}|+|\mathcal{B}|-|\mathcal{A}\cap \mathcal{B}|},   
\end{align*}
where the symbol $|\cdot|$ denotes the cardinality of a set. 
It is clear that this index ranges from 0 to 1.
The associated {\em Jaccard distance} 
for quantifying the dissimilarity between two sets
is defined as one minus the Jaccard index (see, e.g., \cite{gilbert1972distance,kosub2019note}). 
In statistics and data science, the Jaccard index is employed as a statistic to 
measure the similarity between 
sample sets, especially for binary and categorical data 
(see, e.g., \cite{chung2019jaccard,koeneman2022improved}).
For extensive generalizations of the Jaccard index in many other mathematical structures, 
such as scalars, vectors, matrices and multisets,
we refer the readers to~\cite{costa2021further}. 
Due to simplicity and popularity, many applications of the Jaccard index 
and its variants have been founded in various fields, 
such as cell formation~\cite{yin2006similarity}, pattern recognition~\cite{henning2007cluster},
data mining~\cite{singh2009privacy}, natural language processing~\cite{wu2017extractin},
recommendation system~\cite{bag2019efficient}, medical
image segmentation~\cite{eelbode2020optim}, and machine learning~\cite{ali2021machine}. 

Following the original definition on the sets,
the Jaccard index of two vertices in a graph can naturally be extended to equal 
the number of common neighbors divided by the number of vertices 
that are neighbors of at least one of them
(see, e.g., \cite{fan2019similarity}).
As a graph benchmark suitable for real-world applications, 
the Jaccard index has also been proposed to determine the similarity 
in graphs or networks~\cite{kogge2016jaccard},
because of its clear interpretability and computational scalability. 
This index, as well as its variants, is employed  to find core nodes 
for community detection in complex networks~\cite{berahmand2018community,miasnikof2022empirical},
to estimate the coupling strength between temporal graphs~\cite{mammone2018permutation},
to do link prediction~\cite{zhang2016measuring,sathre2022edge,lu2023embedding}, and among others.

Erd\"{o}s-R\'{e}nyi random graph is widely used as a benchmark model 
in statistical network analysis (see, e.g., \cite{arias2014community,verzelen2015community}).
In the simulation study of~\cite{shestopaloff2022statistical},  it is shown that
the empirical cumulative distribution functions of Jaccard indices over all vertex pairs in two network models, 
Erd\"{o}s-R\'{e}nyi random graph and the stochastic block model, are quite different.
Despite the widespread applications of the Jaccard index in network analysis, 
to the best of our knowledge, there is lack of comprehensive study of theoretical results 
on this simple index defined on statistical graph models. 
As the first step to fill this gap, 
our main concern in this paper is derive the asymptotic behavior of the basic Jaccard index in Erd\"{o}s-R\'{e}nyi random graphs.
For numerous probabilistic results on this classical random graph model,
we refer the readers to the monographs~\cite{janson2000random},~\cite{bollobas2001random},
and  \cite{hofstad2016random}. 

Throughout this paper, we shall use the notation as follows.
For any integer $n\ge2$, we denote by $[n]$ the vertex set $\{1,2,\cdots,n\}$.
For an event ${\cal E}$, let $|{\cal E}|$ be the cardinality, $\overline{\cal E}$ the complement,
and ${\bm I}({\cal E})$ the indicator of ${\cal E}$. 
For real numbers $a,b$, we write $a\vee b$ to denote the maximum of $a$ and $b$.
For probabilistic convergence, 
we use $\convlaw$ and $\inprob$ to denote convergence in distribution
and in probability, respectively.

The rest of this paper is organized as follow. 
The Jaccard index of any pair of distinct vertices in the Erd\"{o}s-R\'{e}nyi random graph
$G(n,p)$ is considered in Section 2.
We first compute the mean and variance of this index, 
and then obtain the phase changes of its asymptotic distribution for $p\in [0,1]$ in all regimes, as $n\to \infty$. 
In Section 3, we prove the asymptotic normality of the average of the Jaccard indices over 
all vertex pairs in $G(n,p)$, as $np\to \infty$ and $n^2(1-p)\to \infty$. 

\section{Jaccard index of a vertex pair}
Let us denote by $G(n,p)$ an Erd\"{o}s-R\'{e}nyi random graph on the vertex set $[n]$, 
where each edge is present independently with probability $p$. 
In this paper, we shall consider $p = p(n)$ as a function of the graph size $n$. 
For any two vertices $i, j \in [n]$, let $I_{ij}$ be the indicator
that takes the value 1 if an edge between $i$ and $j$ is present in $G(n,p)$, and takes the value 0 otherwise. 
 It follows that $I_{ii} = 0$, $I_{ij} = I_{ji}$, and $\{I_{ij} : 1 \leq i < j \leq n\}$ 
 is a sequence of independent Bernoulli variables with success rate $p$. 
 The $n\times n$ matrix ${\bm A}=(I_{ij})$ is usually called the adjacent matrix of $G(n,p)$,
 and is a symmetric matrix with all diagonal entries equal to zero.
 
 For any vertex $i\in[n]$, let the set ${\cal N}_i$ be its neighborhood, i.e.,  
 ${\cal N}_{i}=\{k:I_{ik}=1,  k\in [n]\}$. 
 For any pair of vertices $i,j\in[n]$,
 we also define their union neighborhood as 
 \begin{align*}
 {\cal N}_{ij}=\big\{k:I_{ik}\vee I_{jk}=1, 
 ~k\in[n]~\mbox{and}~k\neq i,j\big\}, \quad i\neq j.
 \end{align*}
 Notice that here the neighborhood set ${\cal N}_{ij}$ does not contain vertices 
 $i$ and $j$ themselves, even if $I_{ij}=1$.
 Then the Jaccard index of vertices $i$ and $j$ in $G(n,p)$ is formally defined as 
\begin{align}\label{defineJ}
	J_{ij}^{(n)}=\frac{|{\cal N}_{i}\cap {\cal N}_{j}|}{|{\cal N}_{ij}|}
=:\frac{S_{ij}^{(n)}}{T_{ij}^{(n)}},\quad i\neq j.
\end{align}
One may see that the index $J_{ij}^{(n)}$
given in \eqref{defineJ} is not well-defined 
when ${\cal N}_{ij}$ is a empty set or $T_{ij}^{(n)}=0$.
For convenience, following the idea in \cite{chung2019jaccard}
we shall define $J_{ij}^{(n)}=p/(2-p)$ in this special case.
Indeed, it is shown later that the conditional expectation of $J_{ij}^{(n)}$ is exactly $p/(2-p)$ 
given that $T_{ij}^{(n)}>0$.
In terms of the adjacent matrix ${\bm A}$,  the numerator and denominator in \eqref{defineJ} 
can be rewritten as 
\begin{align}\label{sijntijn}
    S_{ij}^{(n)}=\sum\limits_{k\neq i,j} I_{ik}I_{jk},\quad T_{ij}^{(n)}=\sum\limits_{k\neq i,j} I_{ik}\vee I_{jk}.
\end{align}
Due to independence of the elements in ${\bm A}$, it is clear that the random variables
$S_{ij}^{(n)}$ and $T_{ij}^{(n)}$ follow the binomial distributions 
${\rm Bin}(n-2,p^2)$ and ${\rm Bin}(n-2,p(2-p))$, 
respectively. Hence, the Jaccard index of a vertex pair in $G(n,p)$
is a quotient of two dependent binomial random variables. 
%Moreover, for any distinct vertices $i,j\in[n]$, the probability
%\begin{align*}
%\Prob\big(T_{ij}^{(n)}=0\big)=[1-p(2-p)]^{n-2}\le {\rm e}^{-(n-2)p},
%\end{align*}
%which is sufficient small under the condition $np\to\infty$,
%and thus do not affect the asymptotic properties of $J_{ij}^{(n)}$.  

\subsection{Mean and variance}
We first calculate the mean and variance of the Jaccard index
of any pair of vertices in $G$. By \eqref{defineJ} and \eqref{sijntijn},
one can see that $\{J_{ij}^{(n)}, 1\le i<j\le n\}$ is a sequence of random variables that are
pairwise dependent but identically distributed.
Without loss of generality, we only consider $J_{12}^{(n)}$.

For any vertex $3\le k\le n$, the conditional probability
\begin{align*}
    \mathbb{P}\left(I_{1k}I_{2k}=1 \given I_{1k}\vee I_{2k}=1\right)=\frac{\mathbb{P}\left(I_{1k}I_{2k}=1\right)}{\mathbb{P}\left(I_{1k}\vee I_{2k}=1\right)}=\frac{p}{2-p},
\end{align*}
which is regardless of $k$.
Then for any positive integer $1\le m\le n-2$, given the event $T_{12}^{(n)}=m$, 
the conditional distribution of $S_{12}^{(n)}$ is ${\rm Bin}(m,p/(2-p))$,
due to independence of the indicators $\{I_{1k},I_{2k},3\le k\le n\}$. 
Consequently, we have
\begin{align}\label{conditionalE}
	\E\left[S_{12}^{(n)}\Big|T_{12}^{(n)}=m\right]=\frac{mp}{2-p},
\end{align}
and
\begin{align}\label{conditionalVar}
	\V\left[S_{12}^{(n)}\Big|T_{12}^{(n)}=m\right]=\frac{2mp(1-p)}{(2-p)^{2}}.
\end{align}
Noting that $J_{12}^{(n)}=p/(2-p)$ in the special case $T_{12}^{(n)}=0$,
by \eqref{defineJ} and \eqref{conditionalE} we thus have
\begin{align*}
    \E\Big[J_{12}^{(n)}\Big|T_{12}^{(n)}\Big]=\frac{p}{2-p},
\end{align*}
which implies that $\E[J_{12}^{(n)}]=p/(2-p)$, and
\begin{align}\label{conditonalEVar}
    \V\left[\E\left(J_{12}^{(n)}\Big|T_{12}^{(n)}\right)\right]=0.
\end{align}
Using the law of total variance, it follows by (\ref{conditionalVar}) and (\ref{conditonalEVar}) that
\begin{align}\label{VarJ12n}
	\V\left[J_{12}^{(n)}\right]&=\E\left[\V\left(J_{12}^{(n)}\Big|T_{12}^{(n)}\right)\right]\notag\\
	&=\sum_{m=1}^{n-2}\Prob\left(T_{12}^{(n)}=m\right)\V\Big[\frac{S_{12}^{(n)}}{m}\Big|T_{12}^{(n)}=m\Big]\notag\\
	&=\frac{2p(1-p)}{(2-p)^{2}}\sum_{m=1}^{n-2}\frac{1}{m}\Prob\big(T_{12}^{(n)}=m\big),
\end{align}
which involves the first inverse moment of the binomial distribution. 
Recalling that $T_{12}^{(n)}$ has the distribution ${\rm Bin}(n-2,p(2-p))$, 
it follows by Corollary 1 in \cite{wang2008asymptotic} that
\begin{align}\label{inverse}
    \sum_{m=1}^{n-2}\frac{1}{m}\Prob\left(T_{12}^{(n)}=m\right)=\frac{1}{np(2-p)}
    \Big(1+O\Big(\frac{1}{np}\Big)\Big),
\end{align}
as $np\to\infty$.
By \eqref{VarJ12n} and \eqref{inverse}, we thus have
\begin{align*}
    \V\Big[J_{12}^{(n)}\Big]=\frac{2(1-p)}{n(2-p)^{3}}\Big(1+O\Big(\frac{1}{np}\Big)\Big).
\end{align*}
	
Collecting the above findings, by Chebyshev's inequality we thus prove the following result.

\begin{proposition}\label{propjapair}
Let $J_{ij}^{(n)}$ be the Jaccard index of any distinct vertices $i,j\in[n]$ in $G(n,p)$. 
Then $\E[J_{ij}^{(n)}]=p/(2-p)$ for all $n\ge 2$.
In particular, as $np\to\infty$, it further follows that
\begin{align*}
    \V\Big[J_{ij}^{(n)}\Big]=\frac{2(1-p)}{n(2-p)^{3}}\Big(1+O\Big(\frac{1}{np}\Big)\Big),
\end{align*}
and $J_{ij}^{(n)}-p/(2-p)$ converges to 0 in probability.
\end{proposition}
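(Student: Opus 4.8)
The statement collects three separate claims, and most of the work has already been laid out in the text, so the proof is largely a matter of assembling and justifying the pieces carefully. The plan is to proceed in the order the quantities appear: first the exact mean, then the asymptotic variance, then the convergence in probability. For the mean, I would note that the argument in the preamble is already complete: by the tower property $\E[J_{12}^{(n)}] = \E\big[\E[J_{12}^{(n)}\mid T_{12}^{(n)}]\big]$, and conditionally on $T_{12}^{(n)}=m$ with $m\geq 1$ the variable $S_{12}^{(n)}$ is ${\rm Bin}(m, p/(2-p))$ because each indicator $I_{1k}\vee I_{2k}$ that equals $1$ independently contributes a common neighbor with probability $p/(2-p)$; hence the conditional expectation of $S_{12}^{(n)}/m$ is $p/(2-p)$, and on the event $T_{12}^{(n)}=0$ we have defined $J_{12}^{(n)}=p/(2-p)$ by convention, so the conditional expectation is identically $p/(2-p)$ and therefore so is the unconditional one. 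By exchangeability this holds for every pair $i\neq j$, and it is valid for all $n\geq 2$ with no restriction on $p$.

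For the variance I would invoke the law of total variance. Since $\E[J_{12}^{(n)}\mid T_{12}^{(n)}]$ is the constant $p/(2-p)$, the ``variance of the conditional mean'' term vanishes, leaving only $\E\big[\V(J_{12}^{(n)}\mid T_{12}^{(n)})\big]$. Using \eqref{conditionalVar}, on $\{T_{12}^{(n)}=m\}$ with $m\geq 1$ we get $\V(S_{12}^{(n)}/m\mid T_{12}^{(n)}=m) = \tfrac{2p(1-p)}{(2-p)^2}\cdot\tfrac1m$, while the $m=0$ atom contributes zero, so
\begin{align*}
\V\big[J_{12}^{(n)}\big] = \frac{2p(1-p)}{(2-p)^2}\,\E\!\left[\frac{1}{T_{12}^{(n)}}\,{\bm I}\big(T_{12}^{(n)}\geq 1\big)\right].
\end{align*}
Now $T_{12}^{(n)}\sim{\rm Bin}(n-2, p(2-p))$, and the expected reciprocal (restricted to positivity) of a binomial is exactly the first inverse moment estimated in Corollary 1 of \cite{wang2008asymptotic}, giving \eqref{inverse}: the leading term is $1/\big((n-2)p(2-p)\big)$, and since $(n-2)p \sim np$ under $np\to\infty$ this equals $\tfrac{1}{np(2-p)}\big(1+O(1/(np))\big)$. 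Substituting yields $\V[J_{12}^{(n)}] = \tfrac{2p(1-p)}{(2-p)^2}\cdot\tfrac{1}{np(2-p)}\big(1+O(1/(np))\big) = \tfrac{2(1-p)}{n(2-p)^3}\big(1+O(1/(np))\big)$, as claimed.

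The convergence $J_{ij}^{(n)} - p/(2-p) \inprob 0$ then follows from Chebyshev's inequality: for any $\varepsilon>0$, $\Prob\big(|J_{ij}^{(n)} - p/(2-p)|>\varepsilon\big) \leq \V[J_{ij}^{(n)}]/\varepsilon^2$, and the variance bound above tends to $0$ since $\tfrac{2(1-p)}{n(2-p)^3}\leq \tfrac{2}{n}\to 0$ (note $2-p\geq 1$ and $1-p\leq 1$) while the $O(1/(np))$ correction is bounded. I do not anticipate a genuine obstacle here; the one point that requires a little care is the interface with the cited inverse-moment estimate — one must check that \cite{wang2008asymptotic} provides exactly the restricted expectation $\E[T^{-1}{\bm I}(T\geq1)]$ (equivalently $\sum_{m\geq1} m^{-1}\Prob(T=m)$) rather than some other regularization, and that its hypotheses reduce to $np\to\infty$ in our parametrization (with $n-2$ trials and success probability $p(2-p)$, the relevant mean is $(n-2)p(2-p)\to\infty$ iff $np\to\infty$). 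Everything else is bookkeeping.
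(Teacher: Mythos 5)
Your proposal is correct and follows essentially the same route as the paper: conditioning on $T_{12}^{(n)}$ to get the exact mean, the law of total variance reducing the problem to the restricted first inverse moment $\sum_{m\ge1}m^{-1}\Prob(T_{12}^{(n)}=m)$ of a ${\rm Bin}(n-2,p(2-p))$ variable, the estimate from Corollary 1 of \cite{wang2008asymptotic}, and Chebyshev's inequality for the convergence in probability. The point you flag about checking that the cited result gives exactly the restricted expectation is the right one to be careful about, and it does.
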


\subsection{Asymptotic distribution}
We now establish the asymptotic distribution of the Jaccard index of any vertex pair in $G(n,p)$ 
in the following.
\begin{theorem}\label{asymptoticdis}
Let $J_{ij}^{(n)}$ be the Jaccard distance of any distinct vertices $i,j\in[n]$ in $G(n,p)$. 
\begin{itemize}
    \item[(i)] If $np^{2}(1-p)\to\infty$, then
    \begin{align*}
        \sqrt{\frac{n(2-p)^{3}}{2(1-p)}}\Big(J_{ij}^{(n)}-\frac{p}{2-p}\Big)\convlaw Z,
    \end{align*}
    where $Z$ denotes a standard normal random variable;
    \item[(ii)] If $np^2\to \lambda$ for some constant $\lambda>0$, 
    then $2npJ_{ij}^{(n)}\convlaw {\rm Poi}(\lambda)$, 
    where ${\rm Poi}(\lambda)$ denotes the Poisson distribution with parameter $\lambda$;
    \item[(iii)] If $np^2\to0$, then $npJ_{ij}^{(n)}\inprob 0$.
    \item[(iv)] If $n(1-p)\to c$ for some constant $c>0$, then $n(1-J_{ij}^{(n)})\convlaw {\rm Poi}(2c)$.
    \item[(v)]  If $n(1-p)\to 0$, then $n(1-J_{ij}^{(n)})\inprob 0$. 
\end{itemize}	
\end{theorem}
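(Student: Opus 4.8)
\noindent\textit{Proof plan.} Since the variables $J_{ij}^{(n)}$, $1\le i<j\le n$, are identically distributed, it suffices to treat $J_{12}^{(n)}$. The starting point is the conditional description from Section~2.1: classify each vertex $k\in[n]\setminus\{1,2\}$ according to $(I_{1k},I_{2k})$ as a \emph{common} neighbour of $1$ and $2$ (probability $p^{2}$), a \emph{single} neighbour (probability $2p(1-p)$), or a \emph{non}-neighbour (probability $(1-p)^{2}$); these events are independent over $k$. Thus $S_{12}^{(n)}\sim{\rm Bin}(n-2,p^{2})$, the number of single neighbours $D_{12}^{(n)}:=T_{12}^{(n)}-S_{12}^{(n)}\sim{\rm Bin}(n-2,2p(1-p))$, $T_{12}^{(n)}=S_{12}^{(n)}+D_{12}^{(n)}$, and conditionally on $\{T_{12}^{(n)}=m\ge1\}$ one has $S_{12}^{(n)}\sim{\rm Bin}(m,q)$ with $q:=p/(2-p)$ (the fact used around \eqref{conditionalE}). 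On $\{T_{12}^{(n)}\ge1\}$, $J_{12}^{(n)}=S_{12}^{(n)}/T_{12}^{(n)}$ and $1-J_{12}^{(n)}=D_{12}^{(n)}/T_{12}^{(n)}$, whereas on $\{T_{12}^{(n)}=0\}$, $J_{12}^{(n)}=q$ and $\Prob(T_{12}^{(n)}=0)=(1-p)^{2(n-2)}$. One checks the hypotheses give $np(2-p)\to\infty$ in~(i) (since $np(2-p)\ge np\ge np^{2}\ge np^{2}(1-p)$), $p\to0$ in~(ii), and $p\to1$ in~(iv).

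\noindent\textit{Part (i).} I would condition on $T_{12}^{(n)}$. Because $q(1-q)=2p(1-p)/(2-p)^{2}$, on $\{T_{12}^{(n)}=m\ge1\}$ the quantity $\sqrt{(2-p)^{2}m/(2p(1-p))}\,(J_{12}^{(n)}-q)$ is precisely the standardized sum $(S_{12}^{(n)}-mq)/\sqrt{mq(1-q)}$ of $m$ i.i.d.\ Bernoulli$(q)$ summands, so the Berry--Esseen theorem bounds its Kolmogorov distance to $\Phi$ by $C/\sqrt{mq(1-q)}$ with $C$ \emph{absolute} (here $\E|Y-q|^{3}/(\V Y)^{3/2}=(q^{2}+(1-q)^{2})/\sqrt{q(1-q)}\le1/\sqrt{q(1-q)}$ for $Y\sim{\rm Bernoulli}(q)$); this uniformity in $q$ is essential since $q=q(n)$ may drift to $0$ or to $1$. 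Integrating against the law of $T_{12}^{(n)}$, the total error is at most $C(q(1-q))^{-1/2}\,\E[{\bm I}(T_{12}^{(n)}\ge1)/\sqrt{T_{12}^{(n)}}]$, which by Cauchy--Schwarz is $\le C(q(1-q))^{-1/2}\big(\sum_{m\ge1}m^{-1}\Prob(T_{12}^{(n)}=m)\big)^{1/2}=O\big((q(1-q)\,np(2-p))^{-1/2}\big)=O\big((np^{2}(1-p))^{-1/2}\big)\to0$ by \eqref{inverse}. Adding the term $\Prob(T_{12}^{(n)}=0)=(1-p)^{2(n-2)}\to0$ (on which event the above quantity is $0$), this yields $\sqrt{(2-p)^{2}T_{12}^{(n)}/(2p(1-p))}\,(J_{12}^{(n)}-q)\convlaw Z$. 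Finally $T_{12}^{(n)}/(np(2-p))\inprob1$ (its mean ratio tends to $1$ and $\V[T_{12}^{(n)}]/(\E[T_{12}^{(n)}])^{2}=(1-p)^{2}/((n-2)p(2-p))\to0$), so by Slutsky's theorem $T_{12}^{(n)}$ may be replaced by $np(2-p)$ inside the root; since $(2-p)^{2}\,np(2-p)/(2p(1-p))=n(2-p)^{3}/(2(1-p))$, the claim follows.

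\noindent\textit{Parts (ii)--(v).} In~(ii), $np^{2}\to\lambda$ forces $p\to0$, so $(n-2)p^{2}\to\lambda$ and the law of rare events gives $S_{12}^{(n)}\convlaw{\rm Poi}(\lambda)$, while $\E[T_{12}^{(n)}]=(n-2)p(2-p)\sim2np\to\infty$ with $\V[T_{12}^{(n)}]/(\E[T_{12}^{(n)}])^{2}\to0$ gives $T_{12}^{(n)}/(2np)\inprob1$; writing $2npJ_{12}^{(n)}=S_{12}^{(n)}\cdot(2np/T_{12}^{(n)})$ on $\{T_{12}^{(n)}\ge1\}$ and using $\Prob(T_{12}^{(n)}=0)\to0$, Slutsky gives $2npJ_{12}^{(n)}\convlaw{\rm Poi}(\lambda)$. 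Part~(iv) is the mirror image: $n(1-p)\to c$ forces $p\to1$, so $(n-2)\cdot2p(1-p)\to2c$ gives $D_{12}^{(n)}\convlaw{\rm Poi}(2c)$, $\E[T_{12}^{(n)}]\sim n\to\infty$ gives $T_{12}^{(n)}/n\inprob1$, and $n(1-J_{12}^{(n)})=D_{12}^{(n)}\cdot(n/T_{12}^{(n)})\convlaw{\rm Poi}(2c)$ by Slutsky. Parts~(iii) and~(v) are immediate from Proposition~\ref{propjapair} via Markov's inequality: $npJ_{12}^{(n)}\ge0$ with $\E[npJ_{12}^{(n)}]=np^{2}/(2-p)\to0$ when $np^{2}\to0$, and $n(1-J_{12}^{(n)})\ge0$ with $\E[n(1-J_{12}^{(n)})]=2n(1-p)/(2-p)\to0$ when $n(1-p)\to0$.

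\noindent\textit{Main obstacle.} The delicate step is the conditional limit in part~(i): the binomial being approximated has a success probability $q=q(n)$ that itself moves, so a plain central limit theorem is insufficient and one needs a \emph{uniform} quantitative (Berry--Esseen) bound whose $T_{12}^{(n)}$-averaged error must be tied back to the inverse first moment \eqref{inverse}. Once this is in place one sees that $np^{2}(1-p)\to\infty$ is precisely the threshold forcing the normal-approximation error, $\Prob(T_{12}^{(n)}=0)$, and $(\E[T_{12}^{(n)}])^{-1}$ all to tend to $0$ simultaneously; the remaining ingredients --- binomial concentration and Slutsky's theorem --- are routine.
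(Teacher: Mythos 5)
Your proposal is correct, but it takes a genuinely different route from the paper's in most parts. For (i), the paper linearizes $J_{12}^{(n)}-p/(2-p)$ as $\big((2-p)S_{12}^{(n)}-pT_{12}^{(n)}\big)\big/\big((2-p)T_{12}^{(n)}\big)$, applies the Lindeberg--Feller CLT to the i.i.d.\ sum $(2-p)S_{12}^{(n)}-pT_{12}^{(n)}=\sum_{k}V_{12,k}$, and finishes with Slutsky's lemma via $T_{12}^{(n)}/((n-2)p(2-p))\inprob 1$; you instead exploit the exact conditional law $S_{12}^{(n)}\,\big|\,\{T_{12}^{(n)}=m\}\sim{\rm Bin}(m,p/(2-p))$ and integrate a uniform Berry--Esseen bound against the law of $T_{12}^{(n)}$, tying the averaged error to the inverse first moment \eqref{inverse}. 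Both are valid; the paper's route avoids quantitative normal approximation entirely, while yours yields a rate $O\big((np^{2}(1-p))^{-1/2}\big)$ as a by-product and makes transparent why $np^{2}(1-p)\to\infty$ is the natural threshold. Part (ii) coincides with the paper's argument. For (iv) and (v) the paper computes the characteristic function of $-(2-p)S_{12}^{(n)}+pT_{12}^{(n)}$ and passes to the limit; your observation that $D_{12}^{(n)}=T_{12}^{(n)}-S_{12}^{(n)}\sim{\rm Bin}(n-2,2p(1-p))$ counts the single neighbours, so that $1-J_{12}^{(n)}=D_{12}^{(n)}/T_{12}^{(n)}$ has a binomial numerator, reduces (iv) to the plain Poisson limit theorem plus Slutsky and is arguably cleaner (you correctly note that $\Prob(T_{12}^{(n)}=0)\to0$ is needed here, since on that event $n(1-J_{12}^{(n)})\to 2c\neq 0$). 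For (iii) and (v) your Markov-inequality arguments from $\E[J_{ij}^{(n)}]=p/(2-p)$ and $0\le J_{ij}^{(n)}\le 1$ (Proposition~\ref{propjapair}) are simpler than, respectively, the paper's bound via $\Prob(S_{12}^{(n)}\neq0)\to0$ and its characteristic-function sketch.
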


\begin{proof}
As presented in the previous subsection, it is sufficient to consider a single index $J_{12}^{(n)}$.
To prove (i), we first rewrite 
\begin{align}\label{normalisedJ12n}
\sqrt{\frac{(n-2)(2-p)^{3}}{2(1-p)}}\Big(J_{12}^{(n)}-\frac{p}{2-p}\Big)
&=\sqrt{\frac{(n-2)(2-p)}{2(1-p)}}\cdot\frac{(2-p)S_{12}^{(n)}-pT_{12}^{(n)}}{T_{12}^{(n)}}\notag\\
&=\frac{(2-p)S_{12}^{(n)}-pT_{12}^{(n)}}{\sqrt{2(n-2)p^{2}(1-p)(2-p)}}\cdot\frac{(n-2)p(2-p)}{T_{12}^{(n)}}.
\end{align}
For any distinct vertices $i,j,k\in[n]$, let us denote 
\begin{align}\label{vijk}
V_{ij,k}=(2-p)I_{ik}I_{jk}-p\big(I_{ik}\vee I_{jk}\big).
\end{align}
Then, for any fixed two vertices $i,j\in[n]$, the random variables 
$\{V_{ij,k},~k\in[n], k\neq i,j\}$ are independent and 
identically distributed with common mean 0 
and common variance $2p^{2}(1-p)(2-p)$.
Since it follows by \eqref{sijntijn} that  
\begin{align}\label{2pspt}
(2-p)S_{12}^{(n)}-pT_{12}^{(n)}=\sum_{k=3}^{n} V_{12,k},
\end{align}
a direct application of the Lindeberg-Feller central limit theorem yields that
\begin{align}\label{limitp}
    \frac{(2-p)S_{12}^{(n)}-pT_{12}^{(n)}}{\sqrt{2(n-2)p^{2}(1-p)(2-p)}} \convlaw Z,
\end{align}
whenever $np^2(1-p)\to\infty$.
By Chebyshev's inequality, the fact $T_{12}^{(n)}\sim {\rm Bin}((n-2),p(2-p))$ gives that as $np\to\infty$,
\begin{align}\label{Tlimit}
\frac{T_{12}^{(n)}}{(n-2)p(2-p)}\inprob 1,
\end{align}
which, together with \eqref{normalisedJ12n} and \eqref{limitp}, proves (i) by Slutsky's lemma.

If $np^{2}\to \lambda$ for some constant $\lambda>0$, we must have that $p\to 0$ and $np\to\infty$. 
Since $S_{12}^{(n)}\sim {\rm Bin}(n-2,p^2)$, 
Poisson limit theorem yields that $S_{12}^{(n)}\convlaw {\rm Poi}(\lambda)$.
By (\ref{Tlimit}), again using Slutsky’s lemma, one can obtain that  
	\begin{align*}
	2npJ_{12}^{(n)}=\frac{2np}{T_{12}^{(n)}}\cdot S_{12}^{(n)}\convlaw \text{Poi}(\lambda),
	\end{align*}
which proves (ii).

If $np^2\to0$, to prove (iii) we only need to prove that the probability $\Prob(npJ_{ij}^{(n)}>np^2/(2-p))$ tends to 0.
Note that if the event $\{S_{12}^{(n)}=0\}$ occurs, the Jaccard index $J_{ij}^{(n)}$ must be 0 or $p/(2-p)$. Therefore,
\begin{align*}
\Prob\Big(npJ_{ij}^{(n)}>\frac{np^2}{2-p}\Big)=\Prob\Big(J_{ij}^{(n)}>\frac{p}{2-p}\Big)\le \Prob\big(S_{12}^{(n)}\neq0\big)
=1-\Prob\big(S_{12}^{(n)}=0\big).
\end{align*}
Again by the fact $S_{12}^{(n)}\sim {\rm Bin}(n-2,p^2)$, we have that $1-\Prob\big(S_{12}^{(n)}=0\big)\to 0$, if $np^2\to0$.  
This implies (iii).  

Analogously to \eqref{normalisedJ12n}, we have
\begin{align}\label{p2pjn}
\frac{p}{2-p}-J_{12}^{(n)}=\frac{-(2-p)S_{12}^{(n)}+pT_{12}^{(n)}}{(n-2)p(2-p)^2}\cdot\frac{(n-2)p(2-p)}{T_{12}^{(n)}}.
\end{align}
Note that \eqref{Tlimit} still holds and $n[1-p/(2-p)]$ has the limit $2c$, if $n(1-p)\to c$ for some constant $c>0$. 
To prove (iv), by \eqref{p2pjn} it is now sufficient to show that 
\begin{align}\label{2plimiv}
-(2-p)S_{12}^{(n)}+pT_{12}^{(n)}\convlaw {\rm Poi}(2c)-2c.
\end{align}
For any distinct vertices $i,j,k\in[n]$, one can directly obtain from \eqref{vijk} that the characteristic function of $-V_{ij,k}$ is 
\begin{align*}
f_n(t)=\E\big[{\rm e}^{-{\rm i}tV_{ij,k}}\big]=p^2{\rm e}^{-2{\rm i}t(1-p)}+2p(1-p){\rm e}^{{\rm i}tp}+(1-p)^2,
\end{align*}
where ${\rm i}=\sqrt{-1}$ denotes the imaginary unit.
Then, by \eqref{2pspt} and independence, we have that the characteristic function of $-(2-p)S_{12}^{(n)}+pT_{12}^{(n)}$ is
equal to
\begin{align*}
f_n^{n-2}(t)&=\big[p^2{\rm e}^{-2{\rm i}t(1-p)}+2p(1-p){\rm e}^{{\rm i}tp}+(1-p)^2\big]^{n-2},\quad  t\in\mathbb{R}.
\end{align*}
Note that 
\begin{align*}
\lim_{n\to\infty}n\big[p^2{\rm e}^{-2{\rm i}t(1-p)}+2p(1-p){\rm e}^{{\rm i}tp}+(1-p)^2-1\big]
&=2c{\rm e}^{{\rm i}t}+\lim_{n\to\infty}n\big[p^2{\rm e}^{-2{\rm i}t(1-p)}-1\big]\\
&=2c{\rm e}^{{\rm i}t}+\lim_{n\to\infty}n\big[p^2\big({\rm e}^{-2{\rm i}t(1-p)}-1)+(p^2-1)\big]\\
&=2c\big({\rm e}^{{\rm i}t}-{\rm i}t-1\big),
\end{align*}
if $n(1-p)\to c$. Therefore, the limit of the characteristic function of $-(2-p)S_{12}^{(n)}+pT_{12}^{(n)}$ satisfies 
\begin{align*}
\lim_{n\to\infty}f_n^{n-2}(t)=\exp\big\{2c\big({\rm e}^{{\rm i}t}-{\rm i}t-1\big)\big\},
\end{align*}
which implies \eqref{2plimiv} and completes the proof of (iv).

We only sketch the proof of (v), since it is very similar to (iv). By \eqref{Tlimit} and \eqref{p2pjn}, 
it is sufficient to show that $-(2-p)S_{12}^{(n)}+pT_{12}^{(n)}$ converges in probability to 0 under the condition $n(1-p)\to 0$.
In fact, following the proof of \eqref{2plimiv}, in this case one can deduce that its characteristic function $f_n^{n-2}(t)\to 1$.  
\end{proof}

\section{Average Jaccard index}
In this section, we derive asymptotic properties of
the average Jaccard index of $G(n,p)$, which is given by
\begin{align}\label{Jndef}
	J_n=\frac{2}{n(n-1)}\sum_{i=1}^{n-1} \sum_{j=i+1}^n J_{ij}^{(n)}.
\end{align}
That is, the average Jaccard index $J_n$ is the average of the Jaccard indices over all vertex pairs in 
the Erd\"{o}s-R\'{e}nyi random graph $G(n,p)$. 
An immediate consequence of Proposition \ref{propjapair} is that the expectation of $J_n$ is equal to $p/(2-p)$. 

We now state our main results of this paper in the following.

\begin{theorem}\label{theomavejac}
Let $J_n$ be the average Jaccard index of $G(n,p)$. 
If $np\to\infty$ and $n^2(1-p)\to\infty$, then
\begin{align*}
\frac{n(2-p)^2}{\sqrt{8p(1-p)}}\Big(J_n-\frac{p}{2-p}\Big)\convlaw Z,    
\end{align*}
where $Z$ denotes a standard normal random variable.
\end{theorem}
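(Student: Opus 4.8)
The plan is to linearise $J_n$ around the centred edge count and reduce the statement to the ordinary CLT for a binomial. By \eqref{vijk} and \eqref{2pspt} we have the exact identity $J_{ij}^{(n)}-\tfrac p{2-p}=\big((2-p)T_{ij}^{(n)}\big)^{-1}\sum_{k\ne i,j}V_{ij,k}$, interpreting $0/0:=0$ (consistent with the convention, since $T_{ij}^{(n)}=0$ forces $S_{ij}^{(n)}=0$). Writing $\mu_T:=(n-2)p(2-p)=\E T_{12}^{(n)}$ and replacing the random denominator by $\mu_T$, set $\widehat J_n:=\frac{2}{n(n-1)(n-2)p(2-p)^2}\sum_{i<j}\sum_{k\ne i,j}V_{ij,k}$. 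It then suffices to prove (a) $\frac{n(2-p)^2}{\sqrt{8p(1-p)}}\widehat J_n\convlaw Z$ and (b) $\frac{n(2-p)^2}{\sqrt{8p(1-p)}}\big(J_n-\tfrac p{2-p}-\widehat J_n\big)\inprob 0$.

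\textbf{Part (a).} I would regroup the triple sum by its apex. With $D_k=\sum_{i\ne k}I_{ik}$, using $\sum_{i<j,\,i,j\ne k}I_{ik}I_{jk}=\binom{D_k}2$ and $\sum_{i<j,\,i,j\ne k}(I_{ik}\vee I_{jk})=(n-2)D_k-\binom{D_k}2$ one gets $\sum_{i<j}\sum_{k\ne i,j}V_{ij,k}=\sum_{k=1}^n\big(D_k(D_k-1)-p(n-2)D_k\big)$, and expanding each $D_k$ about $(n-1)p$ collapses this to $2(np-1)\widetilde E_n+Q_n$, where $\widetilde E_n:=\sum_{i<j}(I_{ij}-p)$ and $Q_n:=\sum_k\big(\widetilde D_k^2-(n-1)p(1-p)\big)$ with $\widetilde D_k:=D_k-(n-1)p$. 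Hence the normalised $\widehat J_n$ equals, up to deterministic factors tending to $1$, the standardised edge count $\widetilde E_n/\sqrt{\binom n2p(1-p)}$ plus a multiple of $Q_n$. The edge-count term is asymptotically standard normal by the Lindeberg CLT for i.i.d.\ bounded summands, which applies because the hypotheses force $n^2p(1-p)\to\infty$ (check $p\le\tfrac12$ and $p>\tfrac12$ separately). For $Q_n$, expanding $\widetilde D_k^2$ and noting that the products $\widetilde I_{ik}\widetilde I_{jk}$ attached to distinct ``cherries'' are uncorrelated (a cherry determines its centre and its endpoints) gives $\V[Q_n]=O\big(n^2p(1-p)+n^3p^2(1-p)^2\big)$, so under the present normalisation its variance is $O\big(1/(n^2p^2)+(1-p)/(np)\big)\to0$; Chebyshev and Slutsky finish (a).

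\textbf{Part (b).} Using $T_{ij}^{(n)}J_{ij}^{(n)}=S_{ij}^{(n)}$ and the apex identity of Part (a), the linearisation error has the exact form
\[
J_n-\tfrac p{2-p}-\widehat J_n=-\frac{1}{\mu_T}\cdot\frac{2}{n(n-1)}\sum_{i<j}Y_{ij},\qquad Y_{ij}:=\big(T_{ij}^{(n)}-\mu_T\big)\Big(J_{ij}^{(n)}-\tfrac p{2-p}\Big).
\]
Since $\E[J_{ij}^{(n)}\mid T_{ij}^{(n)}]=p/(2-p)$, each $Y_{ij}$ is centred; conditioning on $T_{ij}^{(n)}$, using \eqref{conditionalVar} and the inverse-moment estimate \eqref{inverse} gives $\E Y_{12}^2=\tfrac{2p(1-p)}{(2-p)^2}\,\E\big[\tfrac{(T_{12}^{(n)}-\mu_T)^2}{T_{12}^{(n)}}\,{\bm I}(T_{12}^{(n)}\ge1)\big]=O\big(p(1-p)\big)$. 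As $\mu_T\asymp np$, it remains to show $\E\big[\big(\tfrac2{n(n-1)}\sum_{i<j}Y_{ij}\big)^2\big]=o\big(p^3(1-p)\big)$. The diagonal terms contribute $O\big(p(1-p)/n^2\big)=o\big(p^3(1-p)\big)$ because $np\to\infty$. For the off-diagonal covariances I would use that $Y_{ij}$ depends only on the edges at $i$ or $j$ and condition on the edges shared by two index pairs; the crucial point is that the single-edge sensitivity of $Y_{ij}$ is far smaller than the crude bound $|J_{ij}^{(n)}-\tfrac p{2-p}|\le1$ suggests, because in $Y_{ij}\approx(2-p)^{-1}\mu_T^{-1}\big(\sum_k\widetilde U_{ij,k}\big)\big(\sum_l V_{ij,l}\big)$, with $\widetilde U_{ij,k}:=(I_{ik}\vee I_{jk})-p(2-p)$, the identity $\cov(\widetilde U_{ij,k},V_{ij,k})=0$ kills the leading conditional expectations. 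A bookkeeping of the ANOVA components supported on the shared edges then yields $\cov(Y_{ij},Y_{kl})=O\big(p(1-p)^4/n^2\big)$ for disjoint index pairs and $\cov(Y_{ij},Y_{ik})=O\big(p^2(1-p)^4\big)$ when they share a vertex (the latter from the component of $Y_{ij}$ that factors through the degree of the shared vertex). Summing over the $O(n^4)$, resp.\ $O(n^3)$, such pairs gives $O\big(n^2p(1-p)^4+n^3p^2(1-p)^4\big)=o\big(n^4p^3(1-p)\big)$ whenever $np\to\infty$, which is exactly what is needed; Chebyshev then gives (b).

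\textbf{Main obstacle.} The delicate step is the covariance estimate in Part (b). A bound using only $|J_{ij}^{(n)}-p/(2-p)|\le1$ makes the normalised linearisation error $O\big(\sqrt{n(1-p)}/p\big)$, which need not vanish, so one genuinely needs the second-moment cancellation — i.e.\ showing that the degree-one part of $Y_{ij}$ in any single edge has variance of order $p(1-p)^5/n^2$ rather than $\asymp1/n$, and that its degree-two part localises on a single vertex. Everything else — the apex identity, $\V[Q_n]$, the moment bound on $Y_{ij}$, and the role of $n^2(1-p)\to\infty$ (it guarantees $n^2p(1-p)\to\infty$, hence a nondegenerate limit) — is routine given the inverse-moment estimate \eqref{inverse}.
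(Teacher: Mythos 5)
Your overall architecture coincides with the paper's: linearise $J_{ij}^{(n)}$ as $p/(2-p)$ plus the term $\big((2-p)S_{ij}^{(n)}-pT_{ij}^{(n)}\big)/\big((n-2)p(2-p)^2\big)$ plus a remainder (your $-Y_{ij}/\mu_T$ is exactly the paper's $R_{ij}^{(n)}$), prove a CLT for the linear part, and kill the remainder by a second-moment bound that exploits $\E[J_{ij}^{(n)}\mid T_{ij}^{(n)}]=p/(2-p)$. Where you genuinely diverge is Part (a): the paper expresses the linear part through the edge count $P_{1,n}$ and the cherry count $P_{2,n}$ and cites the joint CLT from \cite{feng2013zagreb}, whereas you regroup the triple sum by apex, write it as $2(np-1)\widetilde E_n+Q_n$, and check $\V[Q_n]=O(n^2p(1-p)+n^3p^2(1-p)^2)$ directly (the cherry-orthogonality argument is correct: two distinct products $\widetilde I_{ik}\widetilde I_{jk}$ sharing at most one edge are uncorrelated). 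This makes Part (a) self-contained and reduces everything to the de Moivre--Laplace regime $n^2p(1-p)\to\infty$, which you correctly note follows from the hypotheses; that is a clean and verifiable improvement over an external citation.

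The gap is in Part (b), and you have located it yourself. The identity for $Y_{ij}$, its centering, the bound $\E Y_{12}^2=O(p(1-p))$, and the count of diagonal versus shared-vertex versus disjoint pairs are all fine, and the orders you claim for $\cov(Y_{ij},Y_{ik})$ and $\cov(Y_{ij},Y_{kl})$ would indeed suffice. But those two covariance estimates are asserted via an unexecuted ``ANOVA bookkeeping,'' and they are precisely where the paper's proof does its real work: it expands $R_{ij}^{(n)}$ as $\sum_k V_{ij,k}\cdot\widetilde T_{ij}^{(n)}$, splits the double sum over apexes into the few terms where the two $V$'s share an edge and the many where they do not, and in each case conditions on the shared indicators (the events ${\cal B}_{ab}$ and ${\cal C}_{ab}$) so that the random denominators factor into independent pieces controlled by the two inverse-moment lemmas (Lemma~\ref{leminverse1} for $\E[(\mu_T/(b+X_n)-1)^2]=O(1/(np))$ and Lemma~\ref{leminverse2} for $\E[(b+X_n)^{-4}]$). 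The resulting bounds, $\cov(R_{12}^{(n)},R_{13}^{(n)})=O((1-p)/n^2)$ and $\cov(R_{12}^{(n)},R_{34}^{(n)})=O((1-p)/n^3)$, translate to $\cov(Y_{12},Y_{13})=O(p^2(1-p))$ and $\cov(Y_{12},Y_{34})=O(p^2(1-p)/n)$ --- weaker than what you claim, but still enough. So your plan is sound and your claimed orders are not needed at full strength; what is missing is the explicit conditioning argument that turns ``the single-edge sensitivity is small'' into a bound, and that is a real (though fillable) omission rather than a wrong turn.
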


It is remarkable that the quantity $n^2(1-p)/2$ is the asymptotic expected number of unoccupied edges in $G(n,p)$.
The above theorem suggests that if $G(n,p)$ is neither too sparse (see, e.g., \cite[Section  1.8]{hofstad2016random}) 
nor close to a complete graph, 
then its average Jaccard index $J_n$ has asymptotic normality.  
In order to prove Theorem \ref{theomavejac}, we first introduce two auxiliary lemmas,
both of which involve the inverse moments of the binomial distribution.

\begin{lemma}\label{leminverse1}
If random variable $X_n$ has a binomial distribution with parameters $n$ and $p$, 
then for any fixed constants $a\ge0$ and $b>0$, as $np\to\infty$,
\begin{align}
\E\Big[\Big(\frac{(n+a)p}{b+X_n}-1\Big)^2\Big]=O\Big(\frac{1}{np}\Big). \label{bino}
\end{align}
\end{lemma}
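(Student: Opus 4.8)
The plan is to split the expectation according to whether $X_n$ is close to its mean $np$ or far from it. Fix a constant $\delta\in(0,1)$, say $\delta=1/2$, and write $\E[(\frac{(n+a)p}{b+X_n}-1)^2] = \E[\cdots\,;\,X_n > \delta np] + \E[\cdots\,;\,X_n \le \delta np]$. On the "good" event $\{X_n>\delta np\}$ we have $b+X_n > \delta np$, so $0 < \frac{(n+a)p}{b+X_n} < \frac{(n+a)p}{\delta np} = \frac{n+a}{\delta n}$, which is bounded by a constant for large $n$; hence on this event the integrand $(\frac{(n+a)p}{b+X_n}-1)^2$ is bounded by a constant, and we can analyze it directly.

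For the good-event contribution, I would expand and reduce everything to the first two inverse moments $\E[\frac{1}{b+X_n}]$ and $\E[\frac{1}{(b+X_n)^2}]$ on $\{X_n>\delta np\}$. The cleanest route is to write $\frac{(n+a)p}{b+X_n}-1 = \frac{(n+a)p - b - X_n}{b+X_n}$; the numerator is $(np - X_n) + (ap - b)$, whose first term has mean $0$ and variance $np(1-p) = O(np)$, and whose second term is $O(1)$. Dividing by $b+X_n > \delta np$ and squaring, the dominant piece is $\E[\frac{(np-X_n)^2}{(b+X_n)^2}\,;\,X_n>\delta np] \le \frac{1}{(\delta np)^2}\E[(np-X_n)^2] = \frac{np(1-p)}{(\delta np)^2} = O(\frac{1}{np})$, and the cross and lower-order terms are handled the same way (using $|ap-b|=O(1)$ and $\E[|np-X_n|\,;\,X_n>\delta np]\le\sqrt{np(1-p)}$). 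This gives the desired $O(1/np)$ bound for the good part; alternatively one can invoke Corollary 1 of \cite{wang2008asymptotic} (already used in \eqref{inverse}) to get $\E[\frac{1}{b+X_n}] = \frac{1}{np}(1+O(\frac{1}{np}))$ and a companion estimate for the second inverse moment, then expand $(\frac{(n+a)p}{b+X_n}-1)^2$ and collect terms.

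The main obstacle is the "bad" event $\{X_n\le\delta np\}$: there $b+X_n$ can be as small as $b$, so the integrand can be as large as $O(n^2p^2)$, and we must show this is killed by the smallness of $\Prob(X_n\le\delta np)$. Here I would use a Chernoff / large-deviation bound for the binomial: $\Prob(X_n\le\delta np) \le e^{-cnp}$ for a constant $c=c(\delta)>0$, which decays faster than any power of $np$ since $np\to\infty$. Combined with the crude pointwise bound $(\frac{(n+a)p}{b+X_n}-1)^2 \le \frac{(n+a)^2p^2}{b^2} + 1 = O(n^2p^2)$ on the whole space, the bad-event contribution is at most $O(n^2 p^2)\,e^{-cnp} = o(1/np)$. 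Summing the good and bad contributions yields \eqref{bino}. The only care needed is that the constants in the $O(\cdot)$ depend on $a,b,\delta$ but not on $n$, which is automatic since $a,b$ are fixed and $\delta$ is chosen once and for all.
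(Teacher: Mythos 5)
Your proposal is correct and follows essentially the same strategy as the paper's proof: split on a concentration event for $X_n$, kill the bad event with a Chernoff bound against the crude pointwise bound $O(n^2p^2)$, and on the good event bound the denominator below by a constant multiple of $np$ so that the estimate reduces to $\E[(X_n-np)^2]=np(1-p)$. The only cosmetic differences are that the paper uses a two-sided concentration event and a slightly different algebraic rearrangement on the good event, neither of which changes the substance.
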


\begin{proof}
For any $\varepsilon\in (0,1)$, we define
\begin{align*}
{\cal A}_n:=\big\{ (1-\varepsilon)(n+a)p\le b+X_n\le (1+\varepsilon)(n+a)p\big\}.
\end{align*}
Applying the Chernoff's bound for the binomial distribution (see, e.g., \cite[Corollary 2.3]{janson2000random}) gives that
for sufficiently large $n$,
\begin{align*}
  \Prob(\overline{{\cal A}}_n) \le \Prob\Big(\Big|\frac{X_n}{np}-1\Big|>\frac{\varepsilon}{2}\Big)\le
   2\exp\Big\{-\frac{\varepsilon^2}{12} np\Big\}=O\Big(\frac{1}{n^3p^3}\Big).
\end{align*}
Then  for sufficiently large $n$, by noting that
\begin{align*}
 \Big(\frac{(n+a)p}{b+X_n} -1\Big)^2\le \Big(\frac{(n+a)p}{b}\Big)^2+1\le 2n^2p^2,
\end{align*}
we have
\begin{align*}
\E\Big[\Big(\frac{(n+a)p}{b+X_n}-1\Big)^2\Big]&=\E\Big[\Big(\frac{(n+a)p}{b+X_n}-1\Big)^2{\rm I}({\cal A}_n)\Big]+
\E\Big[\Big(\frac{(n+a)p}{b+X_n}-1\Big)^2{\rm I}(\overline{{\cal A}}_n)\Big]\nonumber\\
&\le \E\Big[\frac{(n+a)^2p^2}{(b+X_n)^2}\Big(\frac{b+X_n}{(n+a)p}-1\Big)^2{\rm I}({\cal A}_n)\Big]+2n^2p^2\mathbb{P}(\overline{{\cal A}}_n)\nonumber\\
&\le (1-\varepsilon)^{-2}\E\Big[\Big(\frac{b+X_n}{(n+a)p}-1\Big)^2\Big]+2n^2p^2 \mathbb{P}(\overline{{\cal A}}_n)\nonumber\\
&=O\Big(\frac{1}{np}\Big),
\end{align*}
where we used the inequality
\begin{align*}
  \E\Big[\Big(\frac{b+X_n}{(n+a)p}-1\Big)^2\Big]\le \frac{2}{(n+a)^2p^2} \big(\mathbb{E}[(X_n-np)^2]+(b-ap)^2\big)=O\Big(\frac{1}{np}\Big).  
\end{align*}
The proof of Lemma \ref{leminverse1} is complete.
\end{proof}

\begin{lemma}\label{leminverse2}
If random variable $X_n$ has a binomial distribution with parameters $n$ and $p$, 
then for any fixed positive constants $b$ and $\alpha$, as $np\to\infty$,
\begin{align*}
\E\Big[\frac{1}{(b+X_n)^{\alpha}}\Big]=\frac{1}{(np)^{\alpha}}(1+o(1)). 
\end{align*}
\end{lemma}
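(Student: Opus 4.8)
The plan is to mimic the truncation argument from the proof of Lemma \ref{leminverse1}, splitting the expectation according to whether $X_n$ is concentrated near its mean. Fix $\varepsilon\in(0,1)$ and set ${\cal A}_n:=\{(1-\varepsilon)np\le b+X_n\le (1+\varepsilon)np\}$; the same Chernoff bound as before gives $\Prob(\overline{{\cal A}}_n)\le 2\exp\{-c\,np\}$ for some $c=c(\varepsilon)>0$, which decays faster than any power of $np$. On ${\cal A}_n$ we have $(b+X_n)^{-\alpha}$ trapped between $[(1+\varepsilon)np]^{-\alpha}$ and $[(1-\varepsilon)np]^{-\alpha}$, so
\begin{align*}
\E\Big[\frac{1}{(b+X_n)^{\alpha}}{\rm I}({\cal A}_n)\Big]=\frac{1}{(np)^{\alpha}}\big(1+O(\varepsilon)\big)\Prob({\cal A}_n)=\frac{1}{(np)^{\alpha}}\big(1+O(\varepsilon)+o(1)\big),
\end{align*}
using $\Prob({\cal A}_n)\to 1$. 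On $\overline{{\cal A}}_n$ I would bound $(b+X_n)^{-\alpha}\le b^{-\alpha}$ crudely, so that $\E[(b+X_n)^{-\alpha}{\rm I}(\overline{{\cal A}}_n)]\le b^{-\alpha}\Prob(\overline{{\cal A}}_n)=o((np)^{-\alpha})$ since the Chernoff bound beats the polynomial factor $(np)^{\alpha}$. Combining the two pieces yields
\begin{align*}
(np)^{\alpha}\,\E\Big[\frac{1}{(b+X_n)^{\alpha}}\Big]=1+O(\varepsilon)+o(1),
\end{align*}
and since $\varepsilon$ is arbitrary, letting $\varepsilon\to 0$ after $n\to\infty$ gives the claimed $1+o(1)$.

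The only mildly delicate point is making the "$O(\varepsilon)$ then $\varepsilon\to 0$" step rigorous: one should phrase it as showing that for every $\delta>0$ there is an $\varepsilon$ making the $O(\varepsilon)$ term smaller than $\delta/2$, and then an $N$ making the $o(1)$ term smaller than $\delta/2$, so that $\limsup_n |(np)^{\alpha}\E[(b+X_n)^{-\alpha}]-1|\le\delta$ for all $\delta>0$. Everything else is routine: the Chernoff tail bound is quoted verbatim from \cite[Corollary 2.3]{janson2000random} exactly as in Lemma \ref{leminverse1}, and the trapping inequality on ${\cal A}_n$ is immediate from monotonicity of $x\mapsto x^{-\alpha}$ on the positive half-line. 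I do not anticipate any genuine obstacle here; the lemma is essentially a one-sided refinement of the computation already carried out, and the main work is simply organizing the two-term split cleanly.
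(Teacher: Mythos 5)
Your argument is correct, but it takes a genuinely different route from the paper: the paper's entire proof is a one-line citation of Theorem 2 in \cite{shi2010note}, a general asymptotic result on inverse moments $\E[(b+X_n)^{-\alpha}]$ for nonnegative random variables, whereas you give a self-contained elementary proof exploiting the binomial structure directly. Your truncation-plus-Chernoff argument is sound as written: on ${\cal A}_n$ the monotone trapping gives the factor $(1+O(\varepsilon))$ with the implied constant depending only on the fixed $\alpha$, the crude bound $(b+X_n)^{-\alpha}\le b^{-\alpha}$ off ${\cal A}_n$ combined with the exponential tail does beat the polynomial factor $(np)^{\alpha}$, and you correctly identify and resolve the only delicate point, namely that the limits must be taken in the order $n\to\infty$ first and then $\varepsilon\to 0$ via a $\limsup$ argument. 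What each approach buys: the citation is shorter and reuses a known general-purpose result, but leaves the reader to check that the binomial satisfies the hypotheses of that theorem; your proof is longer but fully self-contained, runs in exact parallel with the paper's own proof of Lemma \ref{leminverse1} (same event ${\cal A}_n$, same Chernoff bound from \cite[Corollary 2.3]{janson2000random}), and would make the exposition of the two lemmas uniform. Either is acceptable.
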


\begin{proof}
This is an immediate consequence of Theorem 2 in \cite{shi2010note}.
\end{proof}

We now give a formal proof of Theorem \ref{theomavejac} in the following. 

\begin{proof}[Proof of Theorem \ref{theomavejac}]
By \eqref{p2pjn}, the index $J_{ij}^{(n)}$ can be expressed as 
\begin{align}\label{expand}
    J_{ij}^{(n)}=\frac{p}{2-p}+\frac{(2-p)S_{ij}^{(n)}-pT_{ij}^{(n)}}{(n-2)p(2-p)^2}+R_{ij}^{(n)}, \quad 1\le i\neq j\le n,
\end{align}
where  the remainder term
    \begin{align}\label{res}
        R_{ij}^{(n)}=\frac{(2-p)S_{ij}^{(n)}-pT_{ij}^{(n)}}{(n-2)p(2-p)^2}
        \left(\frac{(n-2)p(2-p)}{T_{ij}^{(n)}}-1\right).
    \end{align}
Note the special case in \eqref{expand} that the remainder term 
$R_{ij}^{(n)}$ vanishes if $T_{ij}^{(n)}=0$. 
Taking expectation on both sides of \eqref{expand} gives that for any distinct vertices $i,j\in[n]$,
\begin{align}\label{Erijn}
\E\big[R_{ij}^{(n)}\big]=0.
\end{align}
Denote by $R_n$ the sum of all the remainder terms, i.e.,
\begin{align}\label{Rndef}
R_{n}=\sum_{i=1}^{n-1}\sum_{j=i+1}^nR_{ij}^{(n)}.
\end{align}
Then it follows by \eqref{Erijn} that $\E[R_n]=0$.
By \eqref{sijntijn} and the simple fact $I_{ik}\vee I_{jk}=I_{ik}+I_{jk}-I_{ik}I_{jk}$,  
we have that for any $1\le i\neq j\le n$,
\begin{align*}
(2-p)S_{ij}^{(n)}-pT_{ij}^{(n)}=\sum_{k\neq i,j}\big[(2-p)I_{ik}I_{jk}-pI_{ik}\vee I_{jk}\big]
=\sum_{k\neq i,j}\big[2I_{ik}I_{jk}-p\big(I_{ik}+I_{jk}\big)\big],  
\end{align*}
which, together with \eqref{Jndef} and \eqref{expand}, implies that
\begin{align}\label{Jndecom}
	J_{n}&=\frac{p}{2-p}+\frac{2}{n(n-1)}\sum_{i=1}^{n-1}\sum_{j=i+1}^n
	\left(\frac{(2-p)S_{ij}^{(n)}-pT_{ij}^{(n)}}{(n-2)p(2-p)^2}
	+R_{ij}^{(n)}\right)\notag\\
	&=\frac{p}{2-p}+\frac{2}{n(n-1)(n-2)(2-p)^{2}}\sum_{i=1}^{n-1}\sum_{j=i+1}^n\sum_{k\neq i,j}
	\Big(\frac{2I_{ik}I_{jk}}{p}-\big(I_{ik}+I_{jk}\big)\Big)+\frac{2}{n(n-1)}R_{n}\notag\\
	&=\frac{p}{2-p}-\frac{4P_{1,n}}{n(n-1)(2-p)^{2}}+\frac{4P_{2,n}}{n(n-1)(n-2)p(2-p)^{2}}
	+\frac{2}{n(n-1)}R_{n},
\end{align}
where
\begin{align*}
P_{1,n}=\sum_{i=1}^{n-1}\sum_{j=i+1}^nI_{ij}\quad \mbox{and} \quad
P_{2,n}=\sum_{i=1}^{n-1}\sum_{j=i+1}^n\sum_{k\neq i,j}I_{ij}I_{ik}
\end{align*}
denote the number of edges and the number of paths of length two in $G(n,p)$, respectively.
Further, we can rewrite \eqref{Jndecom} as
\begin{align}\label{jnnormalized}
\frac{(n-1)(2-p)^2}{\sqrt{8p(1-p)}}\Big(J_n-\frac{p}{2-p}\Big)=
\sqrt{\frac{2p}{1-p}}\left(-\frac{P_{1,n}}{np}+\frac{P_{2,n}}{n(n-2)p^2}\right)
+\frac{(2-p)^2}{n\sqrt{2p(1-p)}}R_n.
\end{align}

For $P_{1,n}$ and $P_{2,n}$, it is not hard to obtain that their expectations are given by
\begin{align*}
\E[P_{1,n}]=\frac12n(n-1)p, \quad \E[P_{2,n}]=\frac12n(n-1)(n-2)p^2.
\end{align*}
Applying Theorem 3(iii) in \cite{feng2013zagreb} yields that 
if $np\to\infty$ and $n^2(1-p)\to \infty$,
\begin{align*}
\sqrt{\frac{2p}{1-p}}\left(\frac{P_{1,n}-\frac12n(n-1)p}{np},
\frac{P_{2,n}-\frac12n(n-1)(n-2)p^2}{2n(n-2)p^2}\right)\convlaw(Z,Z),
\end{align*}
which implies that 
\begin{align}\label{p1np2nasynorm}
\sqrt{\frac{2p}{1-p}}\left(-\frac{P_{1,n}}{np}+\frac{P_{2,n}}{n(n-2)p^2}\right)
\convlaw Z.
\end{align}
To prove Theorem \ref{theomavejac}, 
by \eqref{jnnormalized} and \eqref{p1np2nasynorm} it is sufficient to show 
\begin{align*}
   \frac{R_n}{n\sqrt{p(1-p)}}\inprob 0. 
\end{align*}
That is, by Chebyshev's inequality and the fact $\E[R_n]=0$, we only need to prove 
\begin{align}\label{VarRn}
\V[R_n]=o\big(n^2p(1-p)\big).
\end{align}

On the other hand, by symmetry, it follows by \eqref{Rndef} that
\begin{align}\label{RVar}
	\V[R_{n}]&=\cov
	\Bigg(\sum_{i=1}^{n-1} \sum_{j=i+1}^n R_{ij}^{(n)},
	~\sum_{i=1}^{n-1} \sum_{j=i+1}^n R_{ij}^{(n)}\Bigg)\notag\\
	&=\frac{n(n-1)}{2}\cov\Bigg(R_{12}^{(n)},
	~\sum_{i=1}^{n-1} \sum_{j=i+1}^n R_{ij}^{(n)}\Bigg)\notag\\
	&=\frac12n(n-1)\V\big[R_{12}^{(n)}\big]+n(n-1)(n-2)\cov\big(R_{12}^{(n)},R_{13}^{(n)}\big)\notag\\
	&\quad+\frac14n(n-1)(n-2)(n-3)\cov\big(R_{12}^{(n)},R_{34}^{(n)}\big).
\end{align}
To prove \eqref{VarRn}, we next estimate the variance and covariances in \eqref{RVar} separately.
By the law of total expectation and \eqref{Erijn}, for the variance of $R_{12}^{(n)}$ we have
\begin{align}\label{ResVarR120}
\V\big[R_{12}^{(n)}\big]
&=\E\big[\big(R_{12}^{(n)}\big)^{2}\big]\notag\\
&=\E\big[\E\big[\big(R_{12}^{(n)}\big)^{2}\big|T_{12}^{(n)}\big]\big]\notag\\
&=\sum_{m=0}^{n-2}\E\big[\big(R_{12}^{(n)}\big)^{2}\big|T_{12}^{(n)}=m\big]\Prob(T_{12}^{(n)}=m).
\end{align}
Recalling that $T_{12}^{(n)}\sim {\rm Bin}(n-2,p(2-p))$, and $R_{12}^{(n)}=0$ if $T_{12}^{(n)}=0$. 
By \eqref{conditionalE}, \eqref{conditionalVar} and \eqref{res}, it follows that
\begin{align*}
\E\big[\big(R_{12}^{(n)}\big)^{2}\big|T_{12}^{(n)}=m\big]&=\frac1{(n-2)^2p^2(2-p)^2}\Big(\frac{(n-2)p(2-p)}{m}-1\Big)^2\V\big(S_{12}^{(n)}\big|T_{12}^{(n)}=m\big)\notag\\
&=\frac{2(1-p)}{(n-2)^2p(2-p)^4}\Big(\frac{(n-2)^2p^2(2-p)^2}{m}-2(n-2)p(2-p)+m\Big),
\end{align*}
which, together with \eqref{inverse} and \eqref{ResVarR120}, implies that
\begin{align}\label{ResVarR12}
\V\big[R_{12}^{(n)}\big]
&=\frac{2(1-p)}{(n-2)^2p(2-p)^4}\sum_{m=1}^{n-2}\Big(\frac{(n-2)^2p^2(2-p)^2}{m}-2(n-2)p(2-p)+m\Big)
\Prob\big(T_{12}^{(n)}=m\big)\notag\\
&=\frac{2(1-p)}{(n-2)(2-p)^3}\Big((n-2)p(2-p)\sum_{m=1}^{n-2}\frac1m\Prob(T_{12}^{(n)}=m)
-1+2\Prob\big(T_{12}^{(n)}=0\big)\Big)\notag\\
&=O\left(\frac{1-p}{n^2p}\right),
\end{align}
where in the last equality we used the simple fact
\begin{align*}
0<\Prob\big(T_{12}^{(n)}=0\big)=(1-p)^{2(n-2)}\leq {\rm e}^{-2(n-2)p}=o\left(\frac{1}{np}\right).
\end{align*}

To calculate the covariance of $R_{12}^{(n)}$ and $R_{13}^{(n)}$, 
by convention we introduce the following shorthand notation
\begin{align*}
    \widetilde{T}_{ij}^{(n)}:=\frac{(n-2)p(2-p)}{T_{ij}^{(n)}}-1, 
\end{align*}
for distinct vertices $i,j\in [n]$. Recalling \eqref{vijk}, we have 
\begin{align*}
    (2-p)S_{ij}^{(n)}-pT_{ij}^{(n)}=\sum_{k\neq i,j} V_{ij,k}, \quad i,j\in [n].
\end{align*}
By symmetry and (\ref{res}), it thus follows that
\begin{align} \label{CovR12R13}
    \cov\big(R_{12}^{(n)},R_{13}^{(n)}\big)
	&=\frac{1}{(n-2)^{2}p^{2}(2-p)^{4}}\sum_{k=3}^n\sum_{l\neq1,3}\E\big[V_{12,k}V_{13,l}\widetilde{T}_{12}^{(n)}\widetilde{T}_{13}^{(n)}
	{\rm I}\big(T_{12}^{(n)}T_{13}^{(n)}>0\big)\big]\notag\\
	&=\frac{1}{(n-2)p^{2}(2-p)^{4}}\E\big[V_{12,3}V_{13,2}\widetilde{T}_{12}^{(n)}\widetilde{T}_{13}^{(n)}{\rm I}\big(T_{12}^{(n)}T_{13}^{(n)}>0\big)\big]\notag\\
	&\quad+\frac{(n-3)}{(n-2)p^{2}(2-p)^{4}}\E\big[V_{12,3}V_{13,4}\widetilde{T}_{12}^{(n)}\widetilde{T}_{13}^{(n)}{\rm I}\big(T_{12}^{(n)}T_{13}^{(n)}>0\big)\big],
\end{align}
which splits the covariance into two parts.
Notice  that, by \eqref{vijk}, the discrete random variable $V_{ij,k}\neq 0$ if and only if $I_{ik}\vee I_{jk}=1$.
This implies that if $V_{12,3}V_{13,2}\neq 0$, we have  
\begin{align*}
   T_{12}^{(n)}=1+\sum_{k=4}^{n} I_{1k}\vee I_{2k}\quad\mbox{and}\quad T_{13}^{(n)}=1+\sum_{k=4}^{n} I_{1k}\vee I_{3k},
\end{align*}
and they have the same distribution. Since it follows that, by conditioning on $I_{23}$,  
\begin{align*}
\E[V_{12,3}V_{13,2}]&=\E\big[\big((2-p)I_{13}I_{23}-p(I_{13}\vee I_{23})\big)\big((2-p)I_{12}I_{23}-p(I_{12}\vee I_{23})\big)\big]\notag\\
                    &=p\E\big[\big((2-p)I_{13}-p\big)\big((2-p)I_{12}-p\big)\big]+(1-p)\E\big[p^2I_{13}I_{12}\big] \notag\\
                    &=p^3(1-p)^2+p^4(1-p)\notag\\
                    &=p^3(1-p),
\end{align*}
and that, by Lemma \ref{leminverse1} and the Cauchy-Schwarz inequality, 
\begin{align*}
    \left|\mathbb{E}\Big[\Big(\frac{(n-2)p(2-p)}{1+\sum_{k=4}^{n}I_{1k}\vee I_{2k}}-1\Big)\Big(\frac{(n-2)p(2-p)}{1+\sum_{k=4}^{n}I_{1k}\vee I_{3k}}-1\Big)\Big]\right|&\leq\mathbb{E}\Big[\Big(\frac{(n-2)p(2-p)}{1+\sum_{k=4}^{n}I_{1k}\vee I_{2k}}-1\Big)^{2}\Big]\notag\\
    &=O\Big(\frac{1}{np}\Big),
\end{align*}
we have
\begin{align}\label{V123V132}
&~\E\big[V_{12,3}V_{13,2}\widetilde{T}_{12}^{(n)}\widetilde{T}_{13}^{(n)}{\rm I}\big(T_{12}^{(n)}T_{13}^{(n)}>0\big)\big]\notag\\
=& ~\E\Big[V_{12,3}V_{13,2}\Big(\frac{(n-2)p(2-p)}{1+\sum_{k=4}^{n}I_{1k}\vee I_{2k}}-1\Big)
\Big(\frac{(n-2)p(2-p)}{1+\sum_{k=4}^{n}I_{1k}\vee I_{3k}}-1\Big)\Big]\notag\\
=& ~p^{3}(1-p)\mathbb{E}\Big[\Big(\frac{(n-2)p(2-p)}{1+\sum_{k=4}^{n}I_{1k}\vee I_{2k}}-1\Big)
\Big(\frac{(n-2)p(2-p)}{1+\sum_{k=4}^{n}I_{1k}\vee I_{3k}}-1\Big)\Big] \notag\\
=& ~O\Big(\frac{p^{2}(1-p)}{n}\Big).
\end{align}
Let us define
\begin{align*}
{\cal B}_{ab}:=\big\{I_{12}\vee I_{23}=a,~ I_{14}\vee I_{24}=b\big\},  \quad a,b \in\{0,1\}. 
\end{align*}
Noting that $\mathbb{E}[V_{12,3}]=0$, we have
\begin{align*}
\mathbb{E}\big[V_{12,3}{\rm I}(I_{12}\vee I_{23}=1)\big]
&=-\mathbb{E}\big[V_{12,3}{\rm I}(I_{12}\vee I_{23}=0)\big]\\
&=p\mathbb{E}\big[I_{13}{\rm I}(I_{12}=I_{23}=0)\big]\\
&=p^2(1-p)^2,
\end{align*}
which implies that for any $a,b=0$ or 1,
\begin{align*}
\mathbb{E}\big[V_{12,3}V_{13,4}{\rm I}({\cal B}_{ab})\big]
&=\mathbb{E}\big[V_{12,3}{\rm I}(I_{12}\vee I_{23}=a)\big]
\mathbb{E}\big[V_{13,4}{\rm I}(I_{14}\vee I_{24}=b)\big]\\
&=\mathbb{E}\big[V_{12,3}{\rm I}(I_{12}\vee I_{23}=a)\big]
\mathbb{E}\big[V_{12,3}{\rm I}(I_{12}\vee I_{23}=b)\big]\\
&=(-1)^{a+b}p^4(1-p)^4.
\end{align*}
Analogously to \eqref{V123V132}, we have 
\begin{align}\label{EV123V134}
	&\quad \mathbb{E}\big[V_{12,3}V_{13,4}\widetilde{T}_{12}^{(n)}\widetilde{T}_{13}^{(n)}{\rm I}\big(T_{12}^{(n)}T_{13}^{(n)}>0\big)\big]\notag\\
	&=\sum_{a=0}^1\sum_{b=0}^1\mathbb{E}\big[V_{12,3}V_{13,4}\widetilde{T}_{12}^{(n)}\widetilde{T}_{13}^{(n)}{\rm I}({\cal B}_{ab})\big]\notag\\
	&=\sum_{a=0}^1\sum_{b=0}^1
	\mathbb{E}\big[V_{12,3}V_{13,4}W_{12}(b)W_{13}(a){\rm I}({\cal B}_{ab})\big]\notag\\
	&=\sum_{a=0}^1\sum_{b=0}^1 	\mathbb{E}\big[V_{12,3}V_{13,4}{\rm I}({\cal B}_{ab})\big]
	\mathbb{E}\big[W_{12}(b)W_{13}(a)\big]\notag\\
	&=p^{4}(1-p)^{4}\mathbb{E}\left[\left(W_{12}(0)-W_{12}(1)\right)\left(W_{13}(0)-W_{13}(1)\right)\right],
	%&=O\left(\frac{p^{2}(1-p)^{4}}{n^{2}}\right)
\end{align}
where
\begin{align*}
W_{ij}(a):=\frac{(n-2)p(2-p)}{1+a+\sum_{k=5}^{n}I_{ik}\vee I_{jk}}-1, \quad i,j\in [n], ~a=0,1. 
\end{align*}
Noting that $W_{12}(0)-W_{12}(1)$ and $W_{13}(0)-W_{13}(1)$ have the same distribution, 
by Lemma \ref{leminverse2} and the Cauchy-Schwarz inequality, we have
\begin{align*}
	\E \big|\big(W_{12}(0)-W_{12}(1)\big)\big(W_{13}(0)-W_{13}(1)\big)\big|
	&\le \E \big[\big(W_{12}(0)-W_{12}(1)\big)^2\big]\\
	&\le (n-2)^{2}p^{2}(2-p)^{2}\E\left[\frac{1}{\big(1+\sum_{k=5}^{n}I_{1k}\vee I_{2k}\big)^{4}}\right]\\
	&=O\left(\frac{1}{n^{2}p^{2}}\right),
\end{align*}
which, together with (\ref{CovR12R13}), (\ref{V123V132}) and (\ref{EV123V134}), implies that 
\begin{align}\label{ResuCovR12R13}
\cov\big(R_{12}^{(n)},R_{13}^{(n)}\big)=O\Big(\frac{1-p}{n^2}\Big).
\end{align}

It remains to calculate the second covariance $\mathbf{Cov}(R_{12}^{(n)},R_{34}^{(n)})$ on the right-hand side of \eqref{RVar}, 
and the procedure is similar to the previous one. 
We shall sketch the calculations below, and omit a few specific interpretations. 
Analogously to (\ref{CovR12R13}), we have
\begin{align}\label{ER12R34}
\cov\big(R_{12}^{(n)},R_{34}^{(n)}\big)&=\frac{1}{(n-2)^{2}p^{2}(2-p)^4}
  \sum_{k=3}^n\sum_{l\neq 3,4}\E\big[V_{12,k}V_{34,l}\widetilde{T}_{12}^{(n)}\widetilde{T}_{34}^{(n)}
    {\rm I}\big(T_{12}^{(n)}T_{34}^{(n)}>0\big)\big]\notag\\
 &=\frac{4}{(n-2)^{2}p^{2}(2-p)^4}\E\big[V_{12,3}V_{34,1}\widetilde{T}_{12}^{(n)}\widetilde{T}_{34}^{(n)}
    {\rm I}\big(T_{12}^{(n)}T_{34}^{(n)}>0\big)\big]\notag\\
 &\quad+\frac{2}{(n-2)p^{2}(2-p)^4}\E\big[V_{12,3}V_{34,5}\widetilde{T}_{12}^{(n)}\widetilde{T}_{34}^{(n)}
    {\rm I}\big(T_{12}^{(n)}T_{34}^{(n)}>0\big)\big]\notag\\  
 &\quad+\frac{(n-4)^2}{(n-2)^{2}p^{2}(2-p)^4}\E\big[V_{12,5}V_{34,5}\widetilde{T}_{12}^{(n)}\widetilde{T}_{34}^{(n)}
    {\rm I}\big(T_{12}^{(n)}T_{34}^{(n)}>0\big)\big].   
\end{align}
Define
\begin{align*}
{\cal C}_{ab}:=\big\{I_{14}\vee I_{24}=a,~ I_{23}\vee I_{24}=b\big\},  \quad a,b \in\{0,1\}. 
\end{align*}
After straightforward calculations, we have
\begin{align*}
\E\big[V_{12,3}V_{34,1}{\rm I}({\cal C}_{00})\big]&=p^3(1-p)^3,\\
\E\big[V_{12,3}V_{34,1}{\rm I}({\cal C}_{11})\big]&=p^{3}(1-p)\big(1+3(1-p)^{2}\big),
\end{align*}
and
\begin{align*}
\E\big[V_{12,3}V_{34,1}{\rm I}({\cal C}_{01})\big]=\E\big[V_{12,3}V_{34,1}{\rm I}({\cal C}_{10})\big]
= -2p^3(1-p)^3.
\end{align*}
Then one can conclude that
\begin{align*}
  \E\big[V_{12,3}V_{34,1}{\rm I}({\cal C}_{ab})\big]=O\big(p^3(1-p)\big).  
\end{align*}
Hence, by Cauchy-Schwarz inequality and Lemma \ref{leminverse1},
\begin{align}\label{EV123V341}
\E\big[V_{12,3}V_{34,1}\widetilde{T}_{12}^{(n)}\widetilde{T}_{34}^{(n)}{\rm I}\big(T_{12}^{(n)}T_{34}^{(n)}>0\big)\big]
&=\sum_{a=0}^1\sum_{b=0}^1\E\big[V_{12,3}V_{34,1}W_{12}(a)W_{34}(b){\rm I}({\cal C}_{ab})\big]\notag\\
&=\sum_{a=0}^1\sum_{b=0}^1\E\big[V_{12,3}V_{34,1}{\rm I}({\cal C}_{ab})\big]\E[W_{12}(a)]\E[W_{34}(b)]\notag\\
&=O\Big(\frac{p^2(1-p)}{n}\Big).
\end{align}
Since $V_{12,3}$ and $V_{34,5}$ are independent and have the common mean 0, 
it follows that
\begin{align}\label{EV123V345}
&\quad\E\big[V_{12,3}V_{34,5}\widetilde{T}_{12}^{(n)}\widetilde{T}_{34}^{(n)}
    {\rm I}\big(T_{12}^{(n)}T_{34}^{(n)}>0\big)\big]\notag\\
    &=\E\big[V_{12,3}V_{34,5}\widetilde{T}_{12}^{(n)}\widetilde{T}_{34}^{(n)}
    {\rm I}\big(I_{13}\vee I_{23}=1\big){\rm I}\big(I_{35}\vee I_{45}=1\big)\big]\notag\\
    &=\E\big[V_{34,5}{\rm I}\big(I_{35}\vee I_{45}=1\big)\big]
    \E\Big[V_{12,3}\widetilde{T}_{12}^{(n)}{\rm I}\big(I_{13}\vee I_{23}=1\big)
    \Big(\frac{(n-2)p(2-p)}{1+\sum_{k\neq 3,4,5}(I_{3k}\vee I_{4k})}-1\Big)\Big]\notag\\
    &=\E\big[V_{34,5}\big]
    \E\Big[V_{12,3}\widetilde{T}_{12}^{(n)}{\rm I}\big(I_{13}\vee I_{23}=1\big)
    \Big(\frac{(n-2)p(2-p)}{1+\sum_{k\neq 3,4,5}(I_{3k}\vee I_{4k})}-1\Big)\Big]\notag\\
    &=0.
\end{align}
Similarly, we also have
\begin{align}\label{EV125V345}
\E\big[V_{12,5}V_{34,5}\widetilde{T}_{12}^{(n)}\widetilde{T}_{34}^{(n)}
    {\rm I}\big(T_{12}^{(n)}T_{34}^{(n)}>0\big)\big]=0.
\end{align}
Plugging \eqref{EV123V341}, \eqref{EV123V345} and \eqref{EV125V345} 
into \eqref{ER12R34} yields that 
\begin{align*}
  \cov\big(R_{12}^{(n)},R_{34}^{(n)}\big)=O\Big(\frac{1-p}{n^3}\Big),  
\end{align*}
which, together with (\ref{RVar}), (\ref{ResVarR12}) and (\ref{ResuCovR12R13}),
implies that 
\begin{align*}
{\bf Var}[R_n]=O\Big(\frac{1-p}{p}\Big)+O\big(n(1-p)\big)+O\big(n(1-p)\big)=O\big(n(1-p)\big),
\end{align*}
as $np\to\infty$.
This proves \eqref{VarRn}, and thus completes the proof of Theorem \ref{theomavejac}. 
\end{proof}

\end{document}